\providecommand{\U}[1]{\protect\rule{.1in}{.1in}}
\newtheorem{theorem}{Theorem}
\newtheorem{acknowledgement}[theorem]{Acknowledgement}
\newtheorem{definition}[theorem]{Definition}
\newtheorem{proposition}[theorem]{Proposition}
\newenvironment{proof}[1][Proof]{\noindent\textbf{#1.} }{\ \rule{0.5em}{0.5em}}
\begin{document}

\title{Fingerprints of Closed Trajectories of a Strebel Quadratic Differential}
\author{Gliaa Braek and Faouzi Thabet\\University of Gabes, Tunisia}
\maketitle

\begin{abstract}
In this note, we study the fingerprints of closed smooth curves that are
trajectories of a particular Strebel quadratic differentials. It is a
generalization of the cases of polynomials and rational lemniscates.

\end{abstract}

\bigskip\textit{2010 Mathematics subject classification: 30C10, 30C15, 34E05.}

\textit{Keywords and phrases: Stebel Quadratic differentials.Lemniscates.
Conformal Maps.Fingerprints.}

\section{\bigskip A Strebel quadratic differential}

\bigskip We consider the quadratic differential on the Riemann sphere
$\widehat{%
\mathbb{C}
}$ :
\[
\varpi\left(  z\right)  =-\left(  \sum_{i=1}^{n}\frac{\alpha_{i}}{z-a_{i}%
}\right)  ^{2}dz^{2},
\]
where the $a_{i}$ 's are pairwise distinct complex numbers, the $\alpha_{i}$
's are real numbers with non-negative sum $\alpha.$

\emph{Finite critical points} and \emph{infinite critical points }of the
quadratic differential $\varpi$ are respectively its zero's and poles
($\left\{  \infty\right\}  \cup\left\{  a_{j}\right\}  _{1\leq j\leq n}$); all
other points of $\widehat{%
\mathbb{C}
}$ are called \emph{regular points} of $\varpi.$

\emph{Horizontal trajectories} (or just trajectories) of the quadratic
differential $\varpi$ are the zero loci of the equation%
\[
\varpi\left(  z\right)  >0,
\]
or equivalently%
\begin{equation}
\prod_{i=1}^{n}\left\vert z-a_{i}\right\vert ^{\alpha_{i}}=\text{\emph{const}%
}. \label{eq traj}%
\end{equation}
For a given non-negative real $\lambda,$ we define the \emph{lemniscate
}$\Gamma_{\lambda}$ as the trajectory of the quadratic differential $\varpi$
with $\emph{const}=\lambda$ in (\ref{eq traj}) : \emph{ }
\[
\Gamma_{\lambda}=\left\{  z\in%
\mathbb{C}
;\left\vert f\left(  z\right)  \right\vert =\prod_{i=1}^{n}\left\vert
z-a_{i}\right\vert ^{\alpha_{i}}=\lambda\right\}  ,
\]
where $f\left(  z\right)  $ is the multi-valued function defined by
\[
f\left(  z\right)  =\prod_{i=1}^{n}\left(  z-a_{i}\right)  ^{\alpha_{i}}.
\]
Any connected component of $\Gamma_{\lambda}$ will be called a
\emph{sub-lemniscate.}

The \emph{vertical} (or, \emph{orthogonal}) trajectories are obtained by
replacing $\Im$ by $\Re$ in equation (\ref{eq traj}). The horizontal and
vertical trajectories of the quadratic differential $\varpi$ produce two
pairwise orthogonal foliations of the Riemann sphere $\widehat{%
\mathbb{C}
}$.

A trajectory passing through a critical point of $\varpi$ is called
\emph{critical trajectory}. In particular, if it starts and ends at a finite
critical point, it is called \emph{finite critical trajectory}, otherwise, we
call it an \emph{infinite critical trajectory}. \bigskip If two different
trajectories are not disjoint, then their intersection must be a zero of the
quadratic differential.

The closure of the set of finite and infinite critical trajectories is called
the \emph{critical graph} of $\varpi,$ we denote it by $\Gamma;$ see example
in Figure \ref{FIG1}. \begin{figure}[th]
\centering\includegraphics[height=1.8in,width=2.8in]{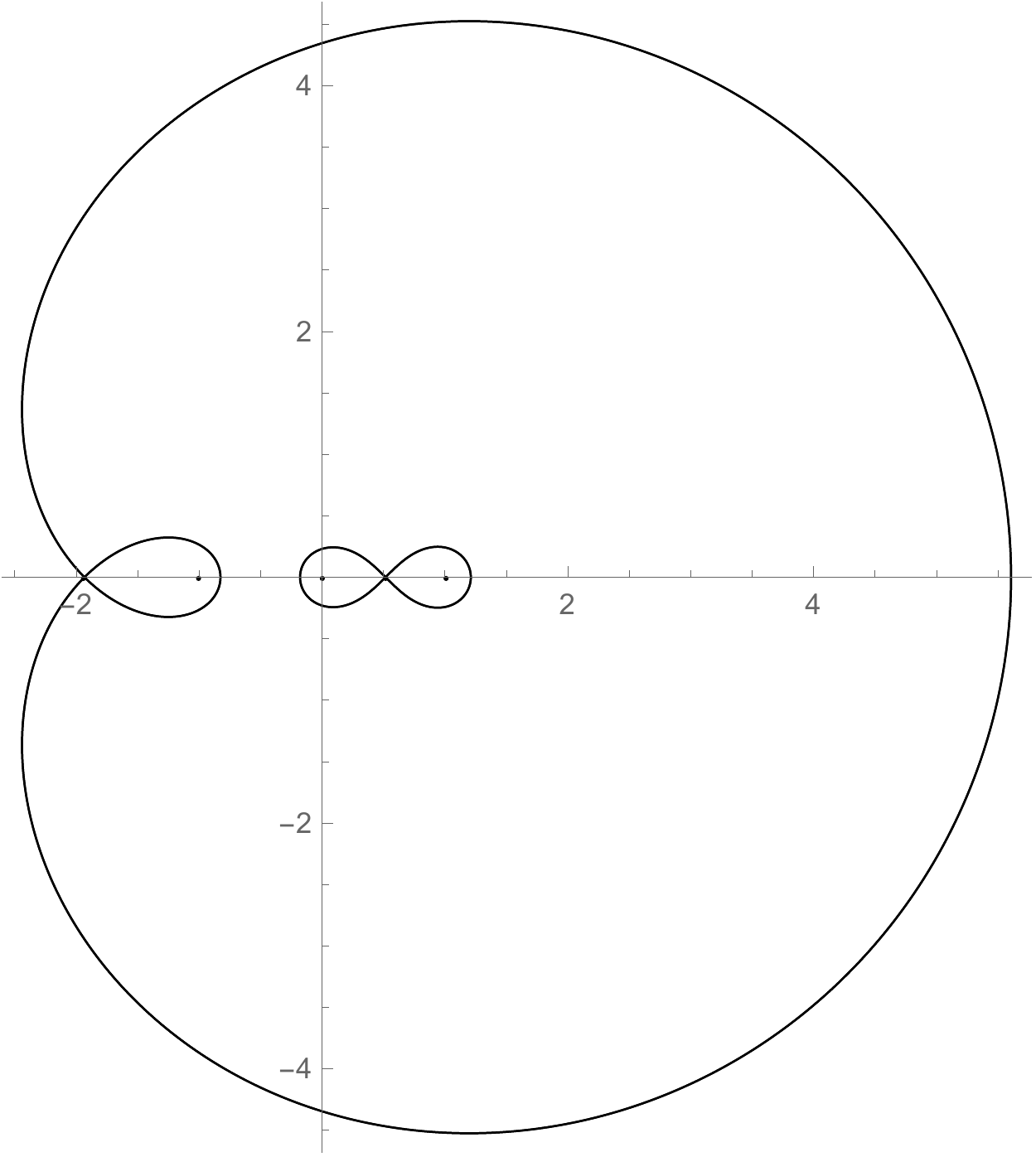}\caption{Critical
graph of $-\left(  \frac{1}{z-1}-\frac{1}{z+1}+\frac{\sqrt{2}}{z}\right)
^{2}dz^{2}.$}%
\label{FIG1}%
\end{figure}

The local and global structures of the trajectories is well known (more
details about the theory of quadratic differentials can be found in
\cite{Strebel},\cite{jenkins}, or \cite{F.Thabet}). The set $\widehat{%
\mathbb{C}
}\setminus\Gamma_{f}$ consists of a finite number of domains called the
\emph{Configurations Domain} of $\varpi.$ For a general quadratic differential
on a $\widehat{%
\mathbb{C}
}$, there are five kind of Configuration Domains, see \cite[Theorem3.5]%
{jenkins}. Since all the infinite critical points of $\varpi$ are poles of
order $2$ with negative residues, then there are three possible Domain Configurations:

\begin{itemize}
\item the \emph{Circle Domain} : It is swept by closed trajectories and
contains exactly one double pole that we will call the center of the domain.
Its boundary is a closed critical trajectory. For a suitably chosen real
constant $c$ and some real number $r>0,$ the function $z\longmapsto
r\exp\left(  cf\left(  t\right)  \right)  $ is a conformal map from the circle
domain $D$ onto the unit disk; it extends continuously to the boundary
$\partial D,$ and sends the double pole to the origin.

\item the \emph{Ring Domain}: It is swept by closed trajectories. Its boundary
consists of two connected components. For a suitably chosen real constant $c$
and some real numbers $0<r_{1}<r_{2},$ the function $z\longmapsto\exp\left(
cf\left(  t\right)  \right)  $ is a conformal map from the circle domain $D$
onto the annulus $\left\{  z:r_{1}<\left\vert z\right\vert <r_{2}\right\}  $
and it extends continuously to the boundary $\partial D.$

\item the \emph{Dense domain : }It is swept by recurrent critical trajectory
i.e., the interior of its closure is non-empty.
\end{itemize}

\begin{definition}
\bigskip A quadratic differential is \emph{Strebel }if the set of its closed
trajectories covers $\widehat{%
\mathbb{C}
}$ up to a zero Lebesgue measure set.
\end{definition}

\begin{proposition}
\bigskip The quadratic differential $\varpi$ is Strebel.
\end{proposition}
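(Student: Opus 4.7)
The plan is to exploit the existence of a single-valued harmonic potential for $\varpi$ in order to rule out the Dense Domain configuration; once this is done, the three-case classification of Configuration Domains quoted from \cite[Theorem 3.5]{jenkins} forces every Configuration Domain to be either a Circle Domain or a Ring Domain, both of which are swept by closed trajectories, and the critical graph has zero Lebesgue measure as a finite union of smooth arcs.

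The first step is to introduce the single-valued real function
\[
u(z) := \sum_{i=1}^{n}\alpha_{i}\log|z-a_{i}| \;=\; \log|f(z)|,
\]
which is harmonic on $\Omega:=\mathbb{C}\setminus\{a_{1},\dots,a_{n}\}$ (each summand is), with logarithmic singularities at the $a_{i}$ and the asymptotic $u(z)\sim\alpha\log|z|$ as $z\to\infty$. Since the $\alpha_{i}$ are real, $|f|$ is genuinely single-valued even though $f$ itself is not. By \eqref{eq traj}, the horizontal trajectories of $\varpi$ are precisely the connected components of the level sets $\{u=c\}$, $c\in\mathbb{R}$, with critical trajectories corresponding to the finitely many values $c$ attained at a zero of $\sum_{i}\alpha_{i}/(z-a_{i})$.

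Next I would argue by contradiction to exclude the Dense Domain case. Suppose some Configuration Domain is dense, so it contains a recurrent trajectory $\gamma$ whose closure $\overline{\gamma}$ has nonempty interior $V\subset\Omega$. Since $u\equiv c$ along $\gamma$ for some constant $c$, continuity of $u$ on $\Omega$ forces $u\equiv c$ on $\overline{\gamma}$, and in particular on the open set $V$. But $u$ is real-analytic on the connected open set $\Omega$, so the identity principle yields $u\equiv c$ on all of $\Omega$, contradicting the unboundedness of $u$ near any $a_{i}$ with $\alpha_{i}\neq 0$.

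The main obstacle is precisely this exclusion of the Dense Domain; all subsequent steps are immediate from material already stated in the excerpt. Once only Circle Domains and Ring Domains remain, each is by definition swept by closed trajectories; and the critical graph $\Gamma$, being a finite union of smooth trajectory arcs together with finitely many critical points, has Lebesgue measure zero in $\widehat{\mathbb{C}}$. Consequently the closed trajectories of $\varpi$ cover $\widehat{\mathbb{C}}\setminus\Gamma$, which has full measure, establishing that $\varpi$ is Strebel.
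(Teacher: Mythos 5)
Your proof is correct and follows essentially the same route as the paper: both exclude the Dense Domain case (the only one not swept by closed trajectories, given that all poles are double with negative residues) by observing that the single-valued harmonic function $u=\log|f|=\Re\bigl(\sum_{i}\alpha_{i}\log(z-a_{i})\bigr)$ would be constant on a nonempty open set, hence constant on all of $\mathbb{C}\setminus\{a_{1},\dots,a_{n}\}$, contradicting its singular behavior. Your version merely makes explicit two points the paper leaves terse --- the identity principle for real-analytic functions and the contradiction via the logarithmic singularities at the $a_{i}$ (or the growth $u\sim\alpha\log|z|$) --- which is a welcome sharpening of the paper's ``clearly impossible by harmonicity.''
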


\begin{proof}
Since the infinite critical points of the quadratic differential $\varpi$ are
only double poles with negative residues, it suffices to show that it has no
recurrent trajectories. Indeed,If $\varpi$ has a recurrent trajectory, then,
its Domain Configuration contains a dense domain $D$ :
\[
\prod_{i=1}^{n}\left\vert z-a_{i}\right\vert ^{\alpha_{i}}=\text{\emph{const}
on }D.
\]
The function
\[
z\longmapsto\Re\left(  \sum_{i=1}^{n}\alpha_{i}\log\left(  z-a_{i}\right)
\right)
\]
is continuous and constant on the open subset $D$ of $%
\mathbb{C}
,$ then it is constant everywhere in $%
\mathbb{C}
$, which is clearly impossible by harmonicity.
\end{proof}

Let $p\left(  z\right)  $ the complex polynomial satisfying%
\[
\sum_{i=1}^{n}\frac{\alpha_{i}}{z-a_{i}}=\frac{p\left(  z\right)  }%
{\prod_{i=1}^{n}\left(  z-a_{i}\right)  }.
\]
By assumption, the leading coefficient $\alpha=\sum_{i=1}^{n}\alpha_{i}\neq0$
of $p\left(  z\right)  ,$ then $\deg p=n-1.$ Let $z_{1},...,z_{n-1}$ be the
zeros (counted with multiplicities) of $p\left(  z\right)  ,$
\[
w_{1}=\prod_{i=1}^{n}\left\vert z_{1}-a_{i}\right\vert ^{\alpha_{i}%
},...,w_{n-1}=\prod_{i=1}^{n}\left\vert z_{n-1}-a_{i}\right\vert ^{\alpha_{i}%
}.
\]

\begin{proposition}
If for some $1$ $\leq i<j\leq n-1,$ we have%
\[
\left\vert w_{i}\right\vert =\left\vert w_{j}\right\vert =\max\left\{
\left\vert w_{k}\right\vert ;k=1,...,n-1\right\}  ,
\]
then, there exists a finite critical trajectory connecting $z_{i}$ and
$z_{j}.$ In particular, the critical graph $\Gamma_{f}$ is connected, if and
only if $\left\vert w_{1}\right\vert =\cdot\cdot\cdot=\left\vert
w_{n-1}\right\vert .$
\end{proposition}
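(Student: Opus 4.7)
The strategy is to localize the desired trajectory on the boundary of the circle domain containing $\infty$. First I would identify trajectories of $\varpi$ with level curves of $|f|$ via (\ref{eq traj}), so that the lemniscate $\Gamma_{M}$ with $M:=|w_i|=|w_j|$ collects every candidate trajectory through $z_i$ and $z_j$. Since $\alpha>0$, the point $\infty$ is a double pole of $\varpi$ with negative residue and hence the center of a circle domain $D_\infty$; its boundary $\partial D_\infty$ is a closed critical trajectory at some level $\lambda_\infty=|w_{k_0}|$.

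The crux of the argument is to show both that $\lambda_\infty=M$ and that $\partial D_\infty$ contains every critical point at level $M$. For any $\lambda>M$, the super-level set $\{|f|\ge\lambda\}$ contains no finite critical point in its interior, so its component around $\infty$ is a topological disk swept by closed trajectories; letting $\lambda\searrow M$ yields $\lambda_\infty\ge M$, and since $\lambda_\infty=|w_{k_0}|\le M$ by the maximality of $M$, one gets $\lambda_\infty=M$. Next, applying the maximum principle to the subharmonic function $|f|$, I would argue that a critical point $z_k$ with $|w_k|=M$ cannot sit in a bounded ``island'' of $\{|f|\ge M\}$ detached from $\infty$: one of its two ``up'' sectors would then bound a region where $|f|>M$ containing no pole of $|f|$, contradicting the strict maximality of $M$ among the critical values.

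Granted this, $\partial D_\infty$ is a single connected closed curve containing both $z_i$ and $z_j$. Travelling along $\partial D_\infty$ starting from $z_i$ one must eventually reach $z_j$, and the intervening arc is a finite critical trajectory connecting them. For the ``in particular'' half, the same reasoning applied at the common level shows that if $|w_1|=\cdots=|w_{n-1}|$ then $\Gamma_f$ coincides with $\partial D_\infty$ and is therefore connected; the converse is immediate because trajectories preserve $|f|$, so critical points at distinct levels $|w_k|\ne|w_l|$ must lie in disjoint components of $\Gamma_f$.

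The main obstacle is the second step above — forcing every $z_k$ at the maximum level onto $\partial D_\infty$. Placing $z_k$ on the lemniscate $\{|f|=M\}$ is routine, but ruling out the possibility of ``separate islands'' of this level set requires combining the strict maximality of $M$, the role of $\infty$ as the dominant peak of $|f|$, the maximum principle for subharmonic functions, and the structural theorem on the three kinds of configuration domains recalled before the proposition.
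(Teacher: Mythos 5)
Your strategy differs from the paper's (which argues by contradiction, viewing the components of a disconnected lemniscate $\Gamma_c$, $c>\left\vert w_i\right\vert$, as $\varpi$-polygons and summing the Teichm\"{u}ller-lemma identities), and it contains a genuine gap at exactly the step you yourself flagged. You rule out a bounded ``island'' of $\left\{ \left\vert f\right\vert \geq M\right\}$ by arguing that an up-sector of $z_k$ would bound a region where $\left\vert f\right\vert >M$ \emph{containing no pole of} $\left\vert f\right\vert$, contradicting the maximum principle. But the paper only assumes that the \emph{sum} $\alpha=\sum\alpha_i$ is positive; individual exponents $\alpha_i$ may be negative, and at such a point $a_i$ one has $\left\vert f\right\vert \rightarrow\infty$. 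These $a_i$ are finite ``peaks'' of $\left\vert f\right\vert$ (double poles of $\varpi$ with negative residue, hence centers of circle domains), so a bounded component of $\left\{ \left\vert f\right\vert >M\right\}$ around such an $a_i$ is perfectly compatible with the maximum principle. The paper's own Figure \ref{FIG1} example, with exponents $(1,-1,\sqrt{2})$, has a circle domain centered at $a=-1$ carrying closed trajectories at arbitrarily high levels of $\left\vert f\right\vert$. Consequently $\infty$ is \emph{not} the dominant peak, and neither of your two key claims --- that $\lambda_\infty=M$ and that every critical point at level $M$ lies on $\partial D_\infty$ --- follows: the maximal critical level can perfectly well be attained by a saddle sitting on the rim of the circle domain of some $a_i$ with $\alpha_i<0$, disjoint from $\partial D_\infty$.

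What survives is the special case where all $\alpha_i>0$ (the polynomial/proper-lemniscate situation): then $\left\vert f\right\vert$ has no finite peaks, your maximum-principle argument correctly shows $\left\{ \left\vert f\right\vert >M\right\}$ has a single component, namely the one around $\infty$; since there are no saddles above level $M$ this component is simply connected, its boundary is connected, contains every critical point at level $M$, and consists of finite critical trajectories joining them --- so in that case your proof is complete, and arguably cleaner than the paper's. To cover the paper's actual hypotheses you would need a further argument showing that two saddles at the common maximal level cannot lie on the boundaries of \emph{distinct} components of $\left\{ \left\vert f\right\vert >M\right\}$ with disjoint level-set components; nothing in your argument excludes, say, one maximal saddle on the figure-eight bounding $D_\infty$ and a second saddle, tuned to the same level, on the rim of $D_{a_i}$ buried inside one of the lobes. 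That multi-peak bookkeeping is precisely what the paper tries to do globally with the Teichm\"{u}ller-lemma count $0=2s-2$ over all components of $\Gamma_c$ (a count which, you may note, itself tacitly assumes the components' interiors are disjoint rather than nested --- the same phenomenon, caused by negative $\alpha_i$, that undermines your island exclusion).
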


\begin{proof}
If no finite critical trajectory connecting $z_{i}$ and $z_{j},$ then a
lemniscate $\Gamma_{c},$ for some $c>\left\vert w_{i}\right\vert ,$ is not
connected : $\Gamma_{c}$ is a disjoint union of $s\geq2$ closed curves
(sub-lemniscates) $L_{1},...,L_{s},$ each of them encircles a part of the
critical graph $\Gamma_{f}.$ Looking at each of these curves as a $\varpi
$-polygon and applying the so-called Teichm\"{u}ller Lemma (see
\cite[Thm.14.1]{Strebel}), we get for $k=1,...,s$ :
\[
0=2+\sum n_{k}.
\]
Making the sum of all equalities, we obtain%
\[
0=2s+2\left(  n-1-n\right)  =2s-2;
\]
a contradiction.
\end{proof}

\section{Fingerprints of sub-lemniscates}

Let $\gamma$ be a $\mathcal{C}^{\infty}$ Jordan curve in $%
\mathbb{C}
;$ by a Jordan theorem, $\gamma$ splits $\widehat{%
\mathbb{C}
}$ into a bounded and an unbounded simply connected components $D_{-}$ and
$D_{+}.$ The Riemann mapping theorem asserts that there exist conformal maps
$\phi_{-}:\Delta\longrightarrow$ $D_{-},$ and $\phi_{+}:\widehat{%
\mathbb{C}
}\setminus\overline{\Delta}\longrightarrow$ $D_{+},$ where $\Delta$ is the
unit disk. The map $\phi_{+}$ is uniquely determined by the normalization
$\phi_{+}\left(  \infty\right)  =\infty$ and $\phi_{+}^{\prime}\left(
\infty\right)  >0.$ It is well-known that $\phi_{-}$ and $\phi_{+}$ extend to
$\mathcal{C}^{\infty}$-diffeomorphisms on the closure of their respective
domain. The \textit{fingerprint of }$\gamma$ is the the map $k$ $:=$ $\phi
_{+}^{-1}\circ\phi_{-}:S^{1}\longrightarrow S^{1}$ from the unit circle
$S^{1}$ to itself. Note that $k$ is uniquely determined by up to
post-composition with an automorphism of $D$ onto itself. Moreover, the
fingerprint $k$ is invariant under translations and scalings of the curve
$\gamma.$ Fingerprints of polynomial and rational "proper" lemniscates have
been studied in \cite{khavinson},\cite{frolova}, \cite{younsi}, and
\cite{Solynin}. Recall that a lemniscate is called "proper" if it is smooth
and connected. In the this paper, we investigate the case of smooth and not
necessary connected lemniscates. 

\bigskip Let $\lambda>0,\lambda\notin\left\{  w_{k},k=1,...,n-1\right\}  .$
Since the quadratic differential $\varpi$ is Strebel, the lemniscate
$\Gamma_{\lambda}$ is the union of a finite number of disjoint sub-lemniscates
in $%
\mathbb{C}
,$ each of them is entirely included either in a Circle or a Ring Domain of
$\varpi$. More precisely, if $\gamma$ is a sub-lemniscate of $\varpi,$ then
\begin{align*}
D_{-}\cap\left\{  a_{j}\right\}  _{1\leq j\leq n} &  \neq\emptyset,\\
D_{+}\cap\left(  \left\{  \infty\right\}  \cup\left\{  a_{j}\right\}  _{1\leq
j\leq n}\right)   &  \neq\emptyset.
\end{align*}
It's obvious (by definition of the Circle Domains) that, $\gamma$ is entirely
included in a Circle Domain of $\varpi,$ if and only if
\[
card\left(  D_{-}\cap\left\{  a_{j}\right\}  _{1\leq j\leq n}\right)
=1,\text{or }D_{+}\cap\left(  \left\{  \infty\right\}  \cup\left\{
a_{j}\right\}  _{1\leq j\leq n}\right)  =\left\{  \infty\right\}  .
\]
Observe that a lemniscate is proper if and only if it is contained in the
circle domain of $\varpi$ containing the infinity $\infty.$

\subsection{Lemniscates in a Circle Domain}

\begin{theorem}
Let be $a$ a pole $\varpi$ ( $a\in\left\{  \infty\right\}  \cup\left\{
a_{j}\right\}  _{1\leq j\leq n}$ ) and $\gamma_{a}$ a sub-lemniscate of
$\varpi.$ Then, the fingerprint $k$ $:S^{1}\longrightarrow S^{1}$ of
$\gamma_{a}$ is given by
\[
\left\{
\begin{array}
[c]{c}%
k(z)=B_{\infty}(z)^{1/\alpha},\text{ \emph{if }}a=\infty\\
k^{-1}(z)=z^{\alpha/\alpha_{j}}B_{j}(z)^{1/\alpha_{j}},\text{ \emph{if}
}a=a_{j}.
\end{array}
\right.
\]
with%
\[
B_{\infty}\left(  z\right)  =e^{i\theta_{\infty}}\prod_{i=1}^{n}\left(
\frac{z-\phi_{-}^{-1}\left(  a_{i}\right)  }{1-\overline{\phi_{-}^{-1}\left(
a_{i}\right)  }z}\right)  ^{\alpha_{i}},
\]
and%
\[
B_{j}\left(  z\right)  =e^{i\theta_{j}}\prod_{i\neq j}\left(  \frac{z-\phi
_{+}^{-1}\left(  a_{i}\right)  }{1-\overline{\phi_{+}^{-1}\left(
a_{i}\right)  }z}\right)  ^{\alpha_{i}}.
\]
for some real numbers $\theta_{\infty}$ and $\theta_{j}.$
\end{theorem}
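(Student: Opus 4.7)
The plan is to identify each conformal map $\phi_\pm$ explicitly via a fractional power of $f$ and then pin down the fingerprint by a maximum-principle argument applied to the ratio between this explicit expression and the advertised Blaschke product.

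Consider first the case $a=\infty$: $D_+$ is the circle domain centered at $\infty$ and $D_-$ contains all the finite poles $a_i$. The branch $f^{1/\alpha}=\prod_i(z-a_i)^{\alpha_i/\alpha}$ is single-valued and holomorphic on $D_+$ because the total exponent $\sum\alpha_i/\alpha=1$ is an integer. Since $f^{1/\alpha}\sim z$ at $\infty$ and $|f^{1/\alpha}|=\lambda^{1/\alpha}$ on $\partial D_+$, the map $f^{1/\alpha}/\lambda^{1/\alpha}$ is a conformal bijection $D_+\to\widehat{\mathbb C}\setminus\overline\Delta$ fixing $\infty$ with positive derivative, and so coincides with $\phi_+^{-1}$ by the normalization of $\phi_+$. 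Hence $k(\zeta)^\alpha=f(\phi_-(\zeta))/\lambda$ on $S^1$. Setting $\beta_i:=\phi_-^{-1}(a_i)\in\Delta$, I would then form
\[
R(z):=\frac{f(\phi_-(z))/\lambda}{\displaystyle\prod_{i=1}^{n}\left(\frac{z-\beta_i}{1-\overline{\beta_i}\,z}\right)^{\alpha_i}}.
\]
With compatible branch choices, the monodromies $e^{2\pi i\alpha_i}$ of numerator and denominator cancel around each $\beta_i$, and they share the same leading order $(z-\beta_i)^{\alpha_i}$ there; so $R$ is single-valued, holomorphic and nonvanishing on $\Delta$. On $S^1$, both $|f\circ\phi_-|=\lambda$ and the Blaschke factors are unimodular, giving $|R|=1$. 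A holomorphic, nonvanishing function on $\Delta$ with unit modulus on $S^1$ is a unimodular constant $e^{i\theta_\infty}$, so $k^\alpha=B_\infty$, which yields the first formula.

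The case $a=a_j$ proceeds symmetrically: $D_-$ is the circle domain centered at $a_j$ and $f^{1/\alpha_j}$ is single-valued and holomorphic on $D_-$ with a simple zero at $a_j$, so $\phi_-^{-1}=e^{-i\tau}f^{1/\alpha_j}/\lambda^{1/\alpha_j}$ for some real $\tau$ (which will be absorbed into $\theta_j$, since $\phi_-$ carries no canonical normalization). Hence $k^{-1}(z)^{\alpha_j}=e^{-i\alpha_j\tau}f(\phi_+(z))/\lambda$ on $S^1$. On $\Delta^c\cup\{\infty\}$, viewed as a topological disk, I would introduce
\[
R(z):=\frac{f(\phi_+(z))/\lambda}{z^{\alpha}\displaystyle\prod_{i\neq j}\left(\frac{z-\phi_+^{-1}(a_i)}{1-\overline{\phi_+^{-1}(a_i)}\,z}\right)^{\alpha_i}}.
\]
The factor $z^\alpha$ absorbs both the growth $f(\phi_+(z))\sim(c_\infty z)^\alpha$ at infinity (where $c_\infty=\phi_+'(\infty)>0$) and the resulting monodromy $e^{-2\pi i\alpha}$, while the Blaschke factors handle the interior branch points $\phi_+^{-1}(a_i)$ with $i\neq j$. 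The same analysis as before makes $R$ single-valued, holomorphic and nonvanishing on $\Delta^c\cup\{\infty\}$, with $|R|=1$ on $S^1$, hence a unimodular constant, producing $k^{-1}(z)^{\alpha_j}=z^\alpha B_j(z)$.

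The main obstacle I anticipate is the careful bookkeeping of branches: one must verify that the multi-valued numerator and denominator of $R$ have matching monodromies around every branch point, including $\infty$ in the second case, so that the ratio is genuinely single-valued; and check that at each branch point the leading orders agree so that Riemann's removable-singularity theorem produces a truly holomorphic nonvanishing extension. Once this is in place, the maximum modulus principle finishes the argument at once.
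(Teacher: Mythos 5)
Your proposal is correct and takes essentially the same route as the paper: you identify $\phi_{+}^{-1}=(f/\lambda)^{1/\alpha}$ (resp.\ $\phi_{-}^{-1}=(f/\lambda)^{1/\alpha_{j}}$) on the relevant Circle Domain, and then show that the quotient of $f\circ\phi_{\mp}$ by the Blaschke-type product is single-valued, holomorphic, nonvanishing and unimodular on $S^{1}$, hence a unimodular constant --- exactly the paper's two steps. The only cosmetic differences are that you justify conformality of the power map by a boundary-modulus/argument-principle count where the paper invokes Jenkins' explicit coordinate $z\mapsto\exp\left(c\int^{z}\sum_{i}\frac{\alpha_{i}}{t-a_{i}}\,dt\right)$ on the Circle Domain (pinning down $c$ by univalence at the pole), and that your interior quotient correctly omits the factor $z^{\alpha}$ appearing as a typo in the paper's corresponding display.
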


\bigskip

\begin{proof}
Jenkins Theorem on the Configuration Domains of the quadratic differential
$\varpi$ asserts that there exists a connected neighborhood $\mathcal{U}_{a}$
of $a$ (a Circle Domain of $\varpi$) bounded by finite critical trajectories
of $\varpi,$ such that all trajectories of $\varpi$ (sub-lemniscates) inside
$\mathcal{U}_{a}$ are closed smooth curves encircling $a.$ Moreover, for a
suitably chosen non-vanishing real constant $c,$ the function
\[
\psi_{a}:z\longmapsto\exp\left(  c\sum_{i=1}^{n}\int^{z}\frac{\alpha_{i}%
}{z-a_{i}}dt\right)
\]
is a conformal map from $\mathcal{U}_{a}$ onto a certain disk centered in
$z=0.$ \begin{figure}[th]
\centering\includegraphics[height=1.8in,width=2.8in]{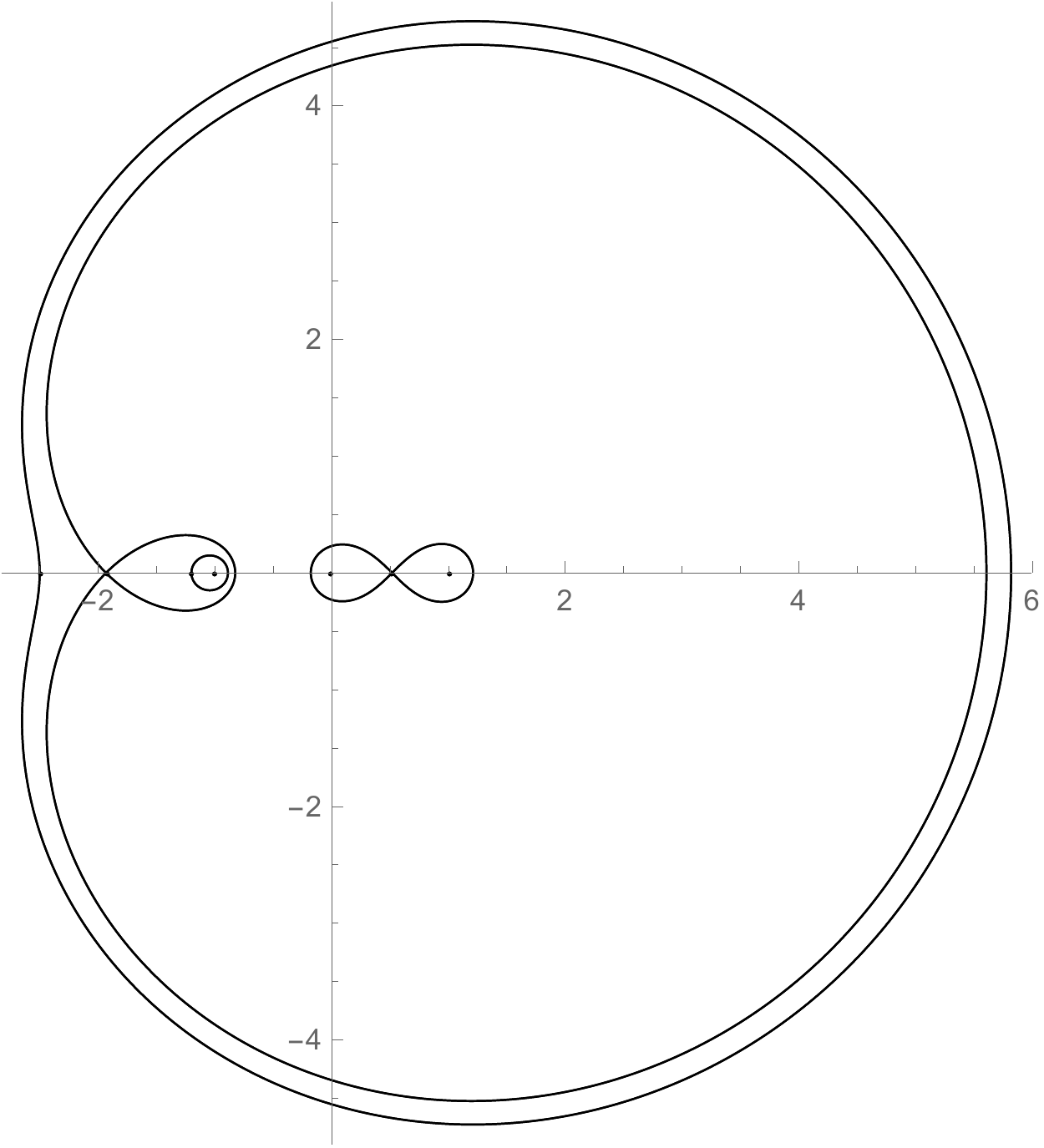}\caption{Critical
graph of $-\left(  \frac{1}{z-1}-\frac{1}{z+1}+\frac{\sqrt{2}}{z}\right)
^{2}dz^{2}.$ and lemniscates in Circle Domains: $a=\infty$, and $a=-1$.}%
\label{FIG2}%
\end{figure}A more obvious form of it, is
\[
\psi_{a}\left(  z\right)  =\beta\prod_{i=1}^{n}\left(  z-a_{i}\right)
^{c\alpha_{i}}%
\]
for some complex number $\beta.$ Baring in mind that $\psi$ is univalent near
$a$, we get
\[
c=\left\{
\begin{array}
[c]{c}%
\frac{1}{\alpha},\text{ if }a=\infty\\
\frac{1}{\alpha_{j}},\text{ if }a=a_{j}.
\end{array}
\right.
\]
It follows that the function
\[
z\longmapsto\left\{
\begin{array}
[c]{c}%
\prod_{i=1}^{n}\left(  z-a_{i}\right)  ^{\alpha_{i}/\alpha},\text{ if
}a=\infty,\\
\prod_{i=1}^{n}\left(  z-a_{i}\right)  ^{\alpha_{i}/\alpha_{j}},\text{ if
}a=a_{j}.
\end{array}
\right.
\]
is a conformal map from $\mathcal{U}_{a}$ onto a certain disk $\Delta_{a}$
centered in $z=0.$ For the sake of simplicity, we may assume that $\Delta_{a}$
with a radius $R>1.$ For the given sub-lemniscate $\gamma_{a}$ in
$\mathcal{U}_{a}$ (see Figure \ref{FIG2}), it is straightforward that the
previous function maps $\Omega_{-}$ conformally onto the unit disk $\Delta.$
Thus,%
\begin{equation}
\left\{
\begin{array}
[c]{c}%
\phi_{+}^{-1}\left(  z\right)  =f\left(  z\right)  ^{1/\alpha},\text{
\emph{if} }a=\infty,\\
\phi_{-}^{-1}\left(  z\right)  =f\left(  z\right)  ^{1/\alpha_{j}},\text{
\emph{if} }a=a_{j}.
\end{array}
\right.  . \label{equality}%
\end{equation}

\bigskip In the case $a=\infty,$ the functions%
\[
z\longmapsto\ \frac{\phi_{-}\left(  z\right)  -a_{i}}{\frac{z-\phi_{-}%
^{-1}\left(  a_{i}\right)  }{1-\overline{\phi_{-}^{-1}\left(  a_{i}\right)
}z}};\left\vert z\right\vert \leq1,i=1,...,n
\]
are holomorphic and non vanishing in the simply connected unit disk
$\overline{\Delta}.$It follows that the function
\[
z\longmapsto\ \frac{f\circ\phi_{-}\left(  z\right)  }{z^{\alpha}\prod
_{i=1}^{n}\left(  \frac{z-\phi_{-}^{-1}\left(  a_{i}\right)  }{1-\overline
{\phi_{-}^{-1}\left(  a_{i}\right)  }z}\right)  ^{\alpha_{i}}};\left\vert
z\right\vert \leq1
\]
is well defined, holomorphic and non vanishing in $\overline{\Delta},$ and has
modulus one in $S^{1}.$ Thus,
\[
\frac{f\circ\phi_{-}\left(  z\right)  }{\prod_{i=1}^{n}\left(  \frac
{z-\phi_{-}^{-1}\left(  a_{i}\right)  }{1-\overline{\phi_{-}^{-1}\left(
a_{i}\right)  }z}\right)  ^{\alpha_{i}}}=e^{i\theta};\left\vert z\right\vert
\leq1,
\]
for some real $\theta.$ From (\ref{equality}), we get
\[
\phi_{+}^{-1}\circ\phi_{-}\left(  z\right)  =B_{\infty}\left(  z\right)
^{1/\alpha};\left\vert z\right\vert =1.
\]

In the case $a=a_{j},$ with the normalization $\frac{\phi_{+}\left(  z\right)
}{z}\sim b>0,$ as $z\rightarrow\infty,$ the functions%
\[
z\longmapsto\left\{
\begin{array}
[c]{c}%
\frac{\phi_{+}\left(  z\right)  -a_{i}}{z\frac{z-\phi_{+}^{-1}\left(
a_{i}\right)  }{1-\overline{\phi_{+}^{-1}\left(  a_{i}\right)  }z}},\text{
\emph{if} }i\neq j\\
\frac{\phi_{+}\left(  z\right)  -a_{j}}{z},\text{ \emph{if} }i=j
\end{array}
\right.  ;\left\vert z\right\vert \geq1
\]
are holomorphic and non vanishing in the simply connected $\widehat{%
\mathbb{C}
}\setminus\overline{\Delta},$ it follows that the function
\[
z\longmapsto\ \frac{f\circ\phi_{+}\left(  z\right)  }{z^{\alpha}\prod_{i\neq
j}\left(  \frac{z-\phi_{+}^{-1}\left(  a_{i}\right)  }{1-\overline{\phi
_{+}^{-1}\left(  a_{i}\right)  }z}\right)  ^{\alpha_{i}}};\left\vert
z\right\vert \geq1
\]
is well defined and holomorphic in $%
\mathbb{C}
\setminus\overline{\Delta},$ does not vanish there, is continuous in $%
\mathbb{C}
\setminus\Delta,$ and has modulus one on $\partial\Delta=S^{1}$. We deduce the
existence of $\theta^{\prime}\in%
\mathbb{R}
$ such that
\[
f\circ\phi_{+}\left(  z\right)  =e^{i\theta^{\prime}}z^{\alpha}\prod_{i\neq
j}\left(  \frac{z-\phi_{+}^{-1}\left(  a_{i}\right)  }{1-\overline{\phi
_{+}^{-1}\left(  a_{i}\right)  }z}\right)  ^{\alpha_{i}};\left\vert
z\right\vert \geq1.
\]
Combining with (\ref{equality}), we get%
\[
\phi_{-}^{-1}\circ\phi_{+}\left(  z\right)  =z^{\alpha/\alpha_{j}}B_{j}\left(
z\right)  ^{1/\alpha_{j}};\left\vert z\right\vert =1.
\]

\end{proof}

\subsection{Lemniscates in a Ring Domain}

In the following, let $\mathcal{U}$ be a Ring Domain of the quadratic
differential $\varpi.$ It is bounded by two sub-lemniscates $\Gamma_{r}$ and
$\Gamma_{R}.$ We may assume that
\[
0<r<1<R.
\]
\begin{figure}[th]
\centering\includegraphics[height=1.8in,width=2.8in]{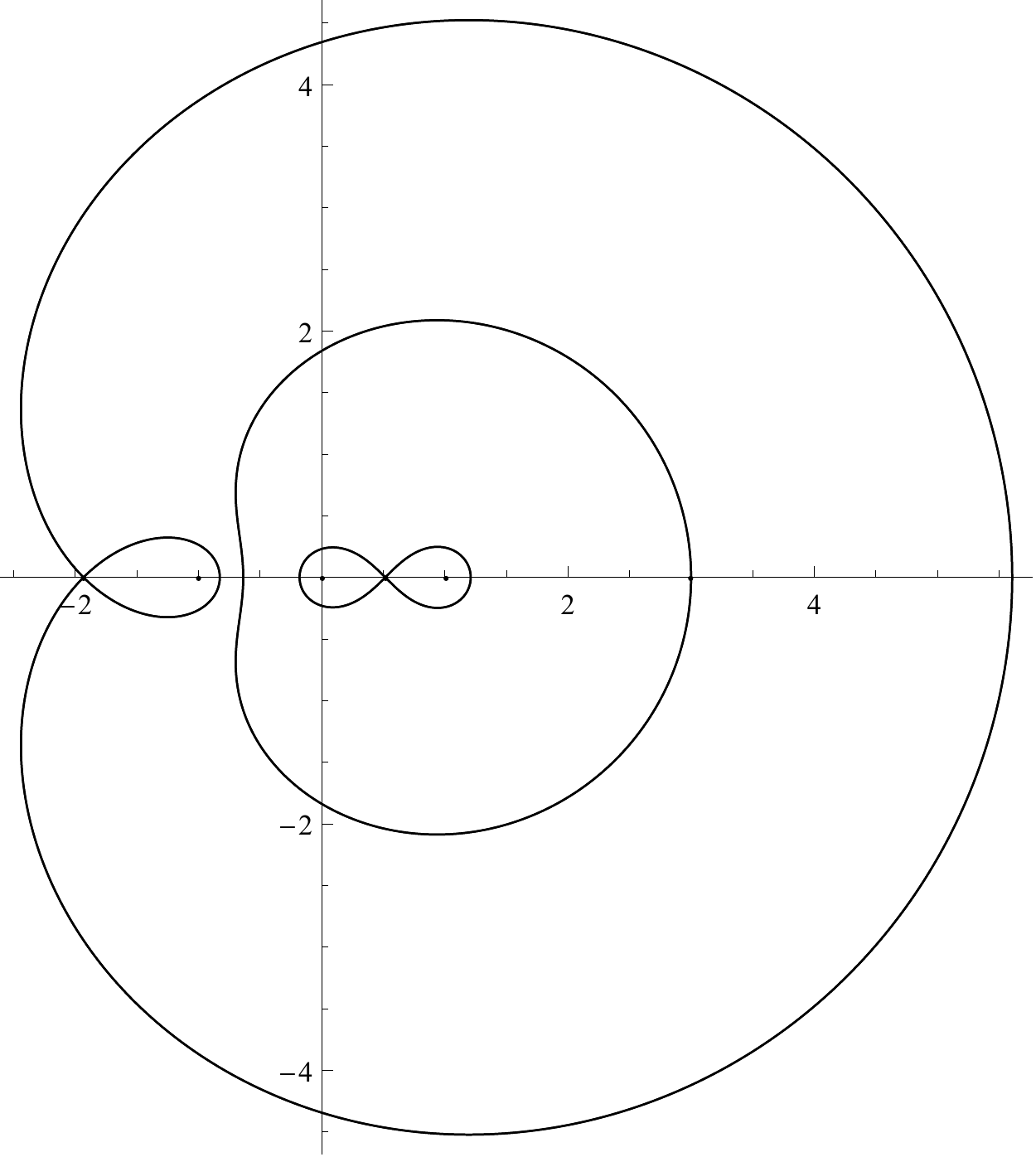}\caption{Critical
graph of $-\left(  \frac{1}{z-1}-\frac{1}{z+1}+\frac{\sqrt{2}}{z}\right)
^{2}dz^{2}$ with a lemniscate in a Ring Domain ( $a=1,b=0$ ).}%
\label{FIG3}%
\end{figure}Suppose that $a_{1},...,a_{s}$ $\left(  1<s<n\right)  $ are in the
bounded domain of $%
\mathbb{C}
$ with boundary $\Gamma_{r}.$ We consider the sub-lemniscate $\Gamma_{1}$ of
$\varpi$ in $\mathcal{U}$ (see Figure \ref{FIG3}).

Since the function
\[
z\longmapsto\frac{f\circ\phi_{-}\left(  z\right)  }{\prod_{i=1}^{s}\left(
\frac{z-\phi_{-}^{-1}\left(  a_{i}\right)  }{1-\overline{\phi_{-}^{-1}\left(
a_{i}\right)  }z}\right)  ^{\alpha_{i}}}%
\]
is holomorphic in $\Delta$, is continuous in $\overline{\Delta},$ does not
vanish in $\Delta,$ and has modulus one on $\partial\Delta,$ we deduce that
there exists $\theta_{1}\in%
\mathbb{R}
$ such that$\bigskip$%
\[
\frac{f\circ\phi_{-}\left(  z\right)  }{\prod_{i=1}^{s}\left(  \frac
{z-\phi_{-}^{-1}\left(  a_{i}\right)  }{1-\overline{\phi_{-}^{-1}\left(
a_{i}\right)  }z}\right)  ^{\alpha_{i}}}=\bigskip e^{i\theta_{1}};\left\vert
z\right\vert \leq1,
\]
and then
\[
f\circ\phi_{-}\left(  z\right)  =\bigskip e^{i\theta_{1}}\prod_{i=1}%
^{s}\left(  \frac{z-\phi_{-}^{-1}\left(  a_{i}\right)  }{1-\overline{\phi
_{-}^{-1}\left(  a_{i}\right)  }z}\right)  ^{\alpha_{i}};\left\vert
z\right\vert \leq1.
\]
Reasoning like in the previous subsection on $\phi_{+}\left(  z\right)  $ in
the case $a=a_{j},$ we get for some $\theta_{2}\in%
\mathbb{R}
$
\[
f\circ\phi_{+}\left(  z\right)  =\bigskip e^{i\theta_{2}}z^{\alpha}%
\prod_{i=s+1}^{n}\frac{z-\phi_{+}^{-1}\left(  a_{i}\right)  }{1-\overline
{\phi_{+}^{-1}\left(  a_{i}\right)  }z};\left\vert z\right\vert \geq1.
\]
Combining the last two equalities for $\left\vert z\right\vert =1,$ with
$\theta_{1}-\theta_{2}=\theta,$ we obtain the following

\begin{theorem}
Let $\Gamma_{1}$ be a sub-lemniscate of $\varpi$ such that $\Omega_{-}$
contains exactly two different zeros $a$ and $b$ of $p$ with respective
multiplicities $\alpha$ and $\beta.$The fingerprint $k$ $:S^{1}\longrightarrow
S^{1}$ of $\Gamma_{1}$ satisfies the functional equation
\[
\left(  B\circ k\right)  \left(  z\right)  =e^{i\theta}A\left(  z\right)
;\left\vert z\right\vert =1.
\]
where $A$ and $B$ are given by
\[
\bigskip B\left(  z\right)  =\bigskip z^{\alpha}\prod_{i=s+1}^{n}\frac
{z-\phi_{+}^{-1}\left(  a_{i}\right)  }{1-\overline{\phi_{+}^{-1}\left(
a_{i}\right)  }z},
\]%
\[
A\left(  z\right)  =\prod_{i=1}^{s}\left(  \frac{z-\phi_{-}^{-1}\left(
a_{i}\right)  }{1-\overline{\phi_{-}^{-1}\left(  a_{i}\right)  }z}\right)
^{\alpha_{i}}.
\]

\end{theorem}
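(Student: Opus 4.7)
The plan is to realize $f\circ\phi_-$ and $f\circ\phi_+$ as Blaschke-type products on the unit disk and its exterior respectively, then combine the two factorizations through the relation $\phi_-=\phi_+\circ k$ that holds on $S^1$. This is the natural adaptation of the strategy used in the Circle Domain theorem: since $\Omega_-$ and $\Omega_+$ now each contain some of the poles of $\varpi$, the Blaschke product on each side carries only those zeros lying in the corresponding component.

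First I would treat $f\circ\phi_-$ on $\overline{\Delta}$. A suitable branch of $f(z)=\prod(z-a_i)^{\alpha_i}$ is holomorphic on $\phi_-(\Delta)=\Omega_-$ with zeros of multiplicity $\alpha_i$ at $a_1,\ldots,a_s$, and $|f|$ is constant on $\partial\Omega_-=\Gamma_1$ by definition of a lemniscate. Pulling back and dividing by the Blaschke factors with prescribed zeros $\phi_-^{-1}(a_i)$ and multiplicities $\alpha_i$, that is, by $A(z)$, yields a function holomorphic and non-vanishing on $\overline{\Delta}$ with modulus one on $S^1$. Applying the maximum modulus principle to both this quotient and its reciprocal forces it to be a unimodular constant $e^{i\theta_1}$, whence $f\circ\phi_-(z)=e^{i\theta_1}A(z)$ on $|z|\le 1$.

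Next I would repeat the argument for $f\circ\phi_+$ on $\widehat{\mathbb{C}}\setminus\overline{\Delta}$. The finite zeros $a_{s+1},\ldots,a_n$ lying in $\Omega_+$ contribute analogous Blaschke factors. In addition, because $\phi_+(z)/z\to b>0$ and $f(w)\sim w^\alpha$ as $w\to\infty$ with $\alpha=\sum\alpha_i$, dividing $f\circ\phi_+$ by both $z^\alpha$ and those Blaschke factors, i.e.\ by $B(z)$, produces a function holomorphic and non-vanishing on the closed exterior of $\Delta$ (including $\infty$) with unit modulus on $S^1$. The same maximum modulus argument then yields $f\circ\phi_+(z)=e^{i\theta_2}B(z)$ for $|z|\ge 1$.

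Finally, restricting to $|z|=1$ and using $\phi_-(z)=\phi_+(k(z))$ gives $e^{i\theta_1}A(z)=e^{i\theta_2}B(k(z))$, which is the claimed identity with $\theta=\theta_1-\theta_2$. I expect the main obstacle to lie in the careful single-valued branch choice for $f$ on each of $\Omega_\pm$ and in checking that pulling these branches back through $\phi_\pm$ is compatible with the explicit Blaschke factorization; in particular, the exponent $\alpha$ appearing in $B$ must be tracked through the asymptotics at $\infty$.
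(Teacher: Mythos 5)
Your proposal is correct and follows essentially the same route as the paper: factor $f\circ\phi_{-}$ as $e^{i\theta_{1}}A(z)$ on $\overline{\Delta}$ and $f\circ\phi_{+}$ as $e^{i\theta_{2}}B(z)$ on $\widehat{\mathbb{C}}\setminus\Delta$ via Blaschke quotients and the maximum modulus principle, then combine on $S^{1}$ through $k=\phi_{+}^{-1}\circ\phi_{-}$ with $\theta=\theta_{1}-\theta_{2}$. Even your attention to the exponent $z^{\alpha}$ coming from $\phi_{+}(z)/z\to b>0$ and $f(w)\sim w^{\alpha}$ at infinity matches the paper, which obtains the exterior factorization by repeating its circle-domain argument for $a=a_{j}$.
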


\begin{acknowledgement}
Part of this work was partially supported by the Laboratory of Mathematics and
Applications "LR17ES11" from the Faculty of Sciences of Gab\`{e}s, Tunisia.
\end{acknowledgement}

\bigskip

\texttt{Higher Institute of Applied Sciences and Technology of Gabes, }

\texttt{Avenue Omar Ibn El Khattab, 6029. Tunisia.}

\texttt{E-mail adresses:}

\texttt{braekgliaa@gmail.com}

\texttt{faouzithabet@yahoo.fr}


\begin{thebibliography}{9}                                                                                                %


\bibitem {khavinson}Ebenfelt,P., Khavinson,D., and Shapiro,H.S.:
Two-dimensional shapes and lemniscates, Complex analysis and dynamical systems
IV. Part 1, 553 (2011), 45-59.

\bibitem {frolova}Frolova A., Khavinson D., Vasil'ev A. (2018) Polynomial
Lemniscates and Their Fingerprints: From Geometry to Topology. In: Agranovsky
M., Golberg A., Jacobzon F., Shoikhet D., Zalcman L. (eds) Complex Analysis
and Dynamical Systems. Trends in Mathematics. Birkh\"{a}user, Cham.

\bibitem {jenkins}Jenkins,J.A.: Univalent functions and conformal mapping,
Ergebnisse der Mathematik und ihrer Grenzgebiete. Neue Folge, Heft18. Reihe:
Moderne Funktionentheorie, Springer-Verlag, Berlin-Gottingen-Heidelberg, 1958.

\bibitem {Strebel}Strebel,K.: Quadratic differentials, Vol.~5 of Ergebnisse
der Mathematik und ihrer Grenzgebiete (3) [Results in Mathematics and Related
Areas (3)], Springer-Verlag, Berlin, 1984.

\bibitem {F.Thabet}Thabet, F.: On The Existence of Finite Critical
Trajectories in a Family of Quadratic Differentials. Bulletin of the
Australian Mathematical Society, 94(1), 80-91.

\bibitem {younsi}Younsi, M.: Shapes, Fingerprints, and Rational Functions.
Proc. Amer. Math. Soc. 144 (2016), 1087-1093.

\bibitem {Solynin}Fingerprints, Lemniscates, and Quadratic
Differentials.:Sbornik Mathematics,2019, Vol 210.
\end{thebibliography}
\end{document}